\newtheorem{theorem}{Theorem}[section]
\newtheorem{definition}[theorem]{Definition}
\newtheorem{lemma}[theorem]{Lemma}
\newtheorem{remark}[theorem]{Remark}
\newcommand{\nN}{\mathbb{N}}
\title[Continuity of modulus of noncompact convexity]{Continuity of modulus of noncompact convexity for  minimalizable measures of noncompactness}
\author[A. Reki\' c-Vukovi\' c]{Amra Reki\' c-Vukovi\' c}
\email{amra.rekic@untz.ba}
\author[N. Oki\v ci\' c]{Nermin Oki\v ci\' c}
\email{nermin.okicic@untz.ba}
\author[V. Pa\v si\' c]{Vedad Pa\v si\' c}
\email{vedad.pasic@untz.ba}
\author[I. Arandjelovi\' c]{Ivan Arandjelovi\' c}
\email{iarandjelovic@mas.bg.ac.rs}
\address{Department of Mathematics
University of Tuzla
Univerzitetska 4
75000 Tuzla
Bosnia and Herzegovina}
\address{Faculty of Mechanical Engineering
University of Belgrade
Kraljice Marije 16
11000 Belgrade
Serbia}
\begin{document}


\newcommand{\AuthorNames}{A. Reki\' c-Vukovi\' c et al.}

\newcommand{\FilMSC}{46B20; 46B22}
\newcommand{\FilKeywords}{modulus of noncompact convexity, measure of noncompactness}
\newcommand{\FilCommunicated}{(name of the Editor, mandatory)}

\maketitle

\begin{abstract}
We consider the modulus of noncompact convexity
$\Delta_{X,\phi}(\varepsilon)$ associated with the minimalizable measure of noncompactness $\phi$. We present some properties of this modulus, while the main result of this paper is showing that $\Delta _{X,\phi }(\varepsilon)$ is a subhomogenous and continuous function on $[0,\phi (\overline{B}_X))$ for an arbitrary minimalizable measure of compactness $\phi$ in the case of a Banach space $X$ with the Radon-Nikodym property.
\end{abstract}

\makeatletter
\renewcommand\@makefnmark%
{\mbox{\textsuperscript{\normalfont\@thefnmark)}}}
\makeatother

\section{Introduction}\label{introSec}
One of the basic concepts of geometry of Banach spaces is that of uniform convexity, introduced by Clarkson \cite{article.3}.  Smulian \cite{article.11} characterized the property dual to uniform convexity, i.e. uniform smoothness  and pointed out that uniform convexity and uniform smoothness describe geometric properties of finite dimensional subspaces of a normed space. Most of the problems in fixed point theory, however, have global character, which was the motivation for considering infinite dimensional counterparts of classical geometric notions. One of these is nearly uniform convexity introduced by Huff in \cite{article.6}.
 Goebel and Sekowski \cite{article.5} define the modulus corresponding to nearly uniform convexity and use the concept of measure of noncompactness to give a new classification of Banach spaces. They prove that whenever the characteristic of uniform noncompact convexity of any Banach space is less than 1, the space is reflexive and has normal structure.

\subsection{Fundamental concepts and definitions}

In this paper $X$ denotes the Banach space, $B(x,r)$ an open ball centred at $x$ of radius $r$, $B_X$ the unit ball and $S_X$ the unit sphere in $X$. If $A\subset X$ we denote by $\overline{A}$ the closure of a set $A$ and by $coA$ the convex envelope of $A$.

\begin{definition}\label{measureDefn}
Let $\mathcal{B}$ be a family of bounded subsets of $X$. We call the mapping $\phi:\mathcal{B}\rightarrow$ $[0,+\infty)$ the \emph{measure of noncompactness} defined on $X$ if it satisfies the following :
\begin{enumerate}

\item $\phi (B)=0\Leftrightarrow B$ is a relatively compact set;

\item $\phi (B)=\phi (\overline{B})$, $\forall B\in\mathcal{B}$

\item $\phi (B_1\cup B_2)=\max \{ \phi (B_1),\phi (B_2)\}$, $\forall B_1,B_2\in\mathcal{B}$.

\end{enumerate}
Some of well known measures of noncompactness are Kuratowski measure ($\alpha$), Hausdorff measure ($\chi$) and Istratescu measure ($\beta$), see e.g. \cite{book2}, \cite{book1} and \cite{article.9}.
We call a measure of noncompactness $\phi$ a \emph{minimalizable measure of noncompactness} if for every infinite bounded set $A$ and for every $\varepsilon>0$ there exists its subset $B\subset A$ which is $\phi$-minimal and such that  $\phi(B)\geq \phi(A)-\varepsilon$. \\
A measure $\phi$ is a \emph{strictly minimalizable measure of noncompactness} if for every infinite, bounded set $A$ there exists its subset $B\subset A$ which is $\phi$-minimal and such that $\phi(B)=\phi(A)$.
\end{definition}
\begin{remark}
Clearly every strictly minimalizable measure is a minimalizable measure of noncompactness. See e.g. \cite{book1} and \cite{book2} for more on measures of noncompactness.
\end{remark}
\begin{definition}\label{modulusDefn}
A modulus of noncompact convexity associated with an arbitrary measure of noncompactness $\phi$ is a function $\Delta_{X,\phi}:[0,\phi(B_X)]\rightarrow [0,1]$ given by
\[\Delta_{X,\phi}(\varepsilon)=\inf \{ 1-d(0,A): A\subseteq \overline{B_X}, A=coA=\overline{A}, \phi(A)> \varepsilon\} \ .\]
\end{definition}
Banas \cite{article.1} considered a modulus $\Delta_{X,\phi}(\varepsilon)$ for $\phi=\chi $, where $\chi$ is the Hausdorff measure of noncompactness. \\ For $\phi =\alpha$, where $\alpha$ is a Kuratowski measure of noncompactness,   $\Delta_{X,\alpha}(\varepsilon)$ represents the Goebel-Sekowski modulus of noncompact convexity, see \cite{article.5}. \\ For $\phi =\beta$, where $\beta$ is the  separation measure of noncompactness, $\Delta_{X,\beta}(\varepsilon)$ represents the  Dominguez-Lopez modulus of noncompact convexity, see \cite{article.2}.
\begin{definition}\label{convexityDefn}
We define the characteristic of noncompact convexity of $X$ associated with a measure of noncompactness $\phi$ by
\[\varepsilon_{\phi}(X)=\sup\{ \varepsilon\geq 0: \Delta_{X,\phi}(\varepsilon)=0\} \ .\]
\end{definition}
For moduli $\Delta_{X,\phi}(\varepsilon)$ with respect to $\phi = \alpha, \chi, \beta$ we have the following inequalities:
\[\Delta_{X,\alpha}(\varepsilon)\leq \Delta_{X,\beta}(\varepsilon)\leq \Delta_{X,\chi}(\varepsilon) \ ,\]
and hence
\[\varepsilon _{\alpha} (X)\geq \varepsilon_{\beta} (X)\geq \varepsilon_{\chi}(X) \ .\]
It is known that $X$ is nearly uniformly convex (NUC) if and only if  $\varepsilon _{\phi }(X)=0$, for $\phi =\alpha, \chi, \beta$, see e.g. \cite{book2}.

Banas \cite{article.1} showed that the modulus $\Delta _{X,\chi }(\varepsilon)$ in the case of a reflexive space $X$ is a subhomogenous function continuous on $[0,1)$. Prus \cite{article.8} gave a result which links the continuity of the modulus $\Delta_{X,\phi}(\varepsilon)$ with the uniform Opial condition which implies the normal structure of the space. In \cite{article.10} we  demonstrated certain properties of the modulus $\Delta_{X,\phi}(\varepsilon)$ as well as its continuity for a strictly minimalizable measure of noncompactness $\phi$ on Banach spaces with the Radon-Nikodym property.
\begin{definition}\label{RNDefn}
We say that the Banach space $X$ is with the Radon-Nikodym property if and only if every nonempty bounded subset $A\subset X$ is dentable, i.e. if and only if for all $\varepsilon >0$, exists $x\in A$, such that $x\notin \overline{co}(A\backslash \overline{B}(x,\varepsilon))$, see e.g. \cite{article.4}.
\end{definition}

The aim of this paper is to show that the modulus $\Delta _{X,\phi }(\varepsilon)$ is a subhomogenous and continuous function on $[0,\phi (B_X))$ for an arbitrary minimalizable measure of compactness $\phi$ in the case of a Banach space $X$ with the Radon-Nikodym property.

{\bf{Structure of paper.}}
This paper has the following structure. Section \ref{introSec} provides the background information and the fundamental concepts and definitions. Section \ref{propertiesSec} contains several results which provide the properties of the $\Delta_{X,\phi}$ modulus. Section \ref{contSec} contains the main result of the paper, namely the result on the continuity of the modulus $\Delta_{X,\phi}$. Section \ref{discussionSec} contains the discussion of the obtained results.

\section{$\Delta_{X,\phi}$ modulus properties}\label{propertiesSec}

\begin{theorem}{\label{TeoremSubhomogenost}}
Let $X$ be a Banach space with Radon-Nikodym property and let $\phi$ be a minimalizable measure of noncompactness. The modulus $\Delta_{X,\phi}(\varepsilon)$ is a subhomogenous function, i.e. we have that for all $k\in [0,1]$ and for all $\varepsilon \in [0,\phi (\overline{B}_X)]$
\[
\Delta_{X,\phi}(k\varepsilon)\leq k\Delta_{X,\phi}(\varepsilon) \ .
\]
\end{theorem}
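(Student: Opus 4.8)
\emph{Proof strategy.} The plan is to adapt Banaś's classical argument for $\Delta_{X,\chi}$ — from a near‑optimal competitor for $\varepsilon$ produce a competitor for $k\varepsilon$ that has been pushed a fraction $k$ of the way toward the sphere — the new ingredient being a reduction to $\phi$‑minimal subsets, needed because a general measure of noncompactness is not assumed to be homogeneous.

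After disposing of the trivial case $k=1$, I would fix $k\in(0,1)$ and $\varepsilon\in[0,\phi(\overline B_X)]$; the case $k=0$ (which asserts $\Delta_{X,\phi}(0)=0$) will fall out at the very end. Given $\delta>0$, choose an admissible set $A$ for $\varepsilon$, i.e.\ $A=coA=\overline A\subseteq\overline B_X$ with $\phi(A)>\varepsilon$, such that $1-d(0,A)<\Delta_{X,\phi}(\varepsilon)+\delta$, and write $d:=d(0,A)$. The target is a set $A'=coA'=\overline{A'}\subseteq\overline B_X$ with $\phi(A')>k\varepsilon$ and $1-d(0,A')\le k(1-d)+\delta$; letting $\delta\downarrow0$ then gives the theorem (and, taking $\varepsilon=0$, also $\Delta_{X,\phi}(0)\le k\,\Delta_{X,\phi}(0)$, hence $\Delta_{X,\phi}(0)=0$, which settles $k=0$ as well).

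For the construction, assume first $d>0$ (the degenerate subcase $d=0$, which forces $\Delta_{X,\phi}(\varepsilon)$ near its maximal value, is treated separately). Separating $A$ from the ball $\overline B(0,d')$ for $d'<d$ by Hahn--Banach and letting $d'\uparrow d$ yields a functional $g$ with $\|g\|=1$ and $\inf_{a\in A}g(a)\ge d-\delta$. Pick $u\in S_X$ with $g(u)>1-\delta$ and set
\[
A':=kA+(1-k)u=\{\,ka+(1-k)u:a\in A\,\}.
\]
Since $a\mapsto ka+(1-k)u$ is an affine homeomorphism, $A'$ is closed and convex; from $\|ka+(1-k)u\|\le k+(1-k)=1$ we get $A'\subseteq\overline B_X$; and for every $x'=ka+(1-k)u\in A'$ we have $\|x'\|\ge g(x')=kg(a)+(1-k)g(u)\ge k(d-\delta)+(1-k)(1-\delta)=1-k(1-d)-\delta$, so $d(0,A')\ge 1-k(1-d)-\delta$, i.e.\ $1-d(0,A')\le k(1-d)+\delta$. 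All of this is routine convex geometry.

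The crux — and the step I expect to be the main obstacle — is showing $\phi(A')>k\varepsilon$. For $\varepsilon=0$ it is immediate, $A'$ being a homeomorphic copy of the non‑compact set $A$. For $\varepsilon>0$ one cannot argue directly, since axioms (1)--(3) do not force $\phi$ to respect the dilation and translation defining $A'$; this is precisely where minimalizability and the Radon--Nikodym property enter. I would fix $\tau$ with $0<\tau<\phi(A)-\varepsilon$, use minimalizability of $\phi$ to pick a $\phi$‑minimal $B\subseteq A$ with $\phi(B)\ge\phi(A)-\tau>\varepsilon$, put $B':=kB+(1-k)u\subseteq A'$, and then invoke the good behaviour of $\phi$ on $\phi$‑minimal sets under affine maps — the estimate $\phi(B')\ge k\,\phi(B)$, whose justification in this generality rests on the structure of $\phi$‑minimal subsets in an RNP space, in the spirit of \cite{article.10} — to obtain $\phi(B')\ge k\,\phi(B)>k\varepsilon$. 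Then monotonicity of $\phi$, an immediate consequence of axiom (3) (if $C_1\subseteq C_2$ then $\phi(C_2)=\phi(C_1\cup C_2)=\max\{\phi(C_1),\phi(C_2)\}\ge\phi(C_1)$), gives $\phi(A')\ge\phi(B')>k\varepsilon$. Hence $A'$ is admissible for $k\varepsilon$, so $\Delta_{X,\phi}(k\varepsilon)\le 1-d(0,A')\le k(1-d)+\delta<k\,\Delta_{X,\phi}(\varepsilon)+(k+1)\delta$, and letting $\delta\downarrow0$ finishes the proof.
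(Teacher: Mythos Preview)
Your overall architecture --- take a near-optimal admissible set for $\varepsilon$, shrink by $k$, and translate toward the sphere --- is the same as the paper's. The gap is precisely at the step you yourself flag as the crux: the estimate $\phi(B')\ge k\,\phi(B)$ for $B'=kB+(1-k)u$ with $B$ $\phi$-minimal. You justify it only by a gesture toward ``the structure of $\phi$-minimal subsets in an RNP space, in the spirit of \cite{article.10}''. But the Radon--Nikod\'ym property is a dentability statement about bounded sets; it says nothing about how an abstract $\phi$ transforms under dilations and translations, and $\phi$-minimality of $B$ gives no control over $\phi(kB+v)$ either. Indeed, once one grants the standard algebraic properties of $\phi$ --- positive homogeneity and translation invariance, which the paper itself tacitly uses when it writes $\phi(kA)=k\phi(A)$ and $\phi(B^*)=\phi(C)$ --- your $\phi$-estimate becomes $\phi(A')=k\phi(A)>k\varepsilon$ immediately, and neither RNP nor minimalizability is needed for it; your Hahn--Banach construction would then go through in \emph{any} Banach space. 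So your invocation of RNP at this point is not doing any work, and the ``main obstacle'' is not an obstacle at the place where you put it.

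The paper deploys the two hypotheses elsewhere, and concretely. After scaling to $kA$ it uses minimalizability to pass to a $\phi$-minimal $B\subset kA$, and then applies RNP to $B$: dentability yields $x_0\in B$ with $x_0\notin C:=\overline{co}\bigl(B\setminus\overline B(x_0,\varepsilon/2)\bigr)$. The competitor is $B^*:=C+\tfrac{1-k}{\|x_0\|}\,x_0$, the translation direction being the dentable point $x_0/\|x_0\|$ rather than a Hahn--Banach witness. Here $\phi$-minimality of $B$ is what guarantees that the slice-and-hull operation preserves noncompactness, $\phi(C)=\phi(B)>k\varepsilon$, and the dentability drives the distance estimate $1-d(0,B^*)<k(\Delta_{X,\phi}(\varepsilon)+\eta)$. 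Your proposal never carries out anything analogous; and the degenerate case $d=0$ that you set aside is also left untreated.
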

\begin{proof}
Let $\eta >0$ be arbitrary and  $\varepsilon \in [0,\phi (\overline{B}_X)]$. By definition \ref{modulusDefn} there exists a convex closed subset $A\subset \overline{B}_X$, $\phi(A)> \varepsilon$ such that
\[1-d(0,A)< \Delta_{X,\phi}(\varepsilon)+\eta \ . \]
For arbitrary $k\in [0,1]$ the set $kA\subset \overline{B}_X$ is convex and closed. Hence we have that $\phi (kA)=k\phi (A)> k\varepsilon$.

As $\phi$ is a minimalizable measure, for $\displaystyle \delta =\frac{\phi (kA)-k\varepsilon}{2}>0$ there exists an infinite $\phi$-minimal set $B\subset kA$, such that
\[\phi (B)\geq \phi (kA)-\delta \ ,\]
i.e. $\phi (B)> k\varepsilon$. From the Radon-Nikodym property of the space $X$, for $\varepsilon >0$ there exists $x_0\in B$ such that
\[x_0\notin \overline{co}\left [B\backslash \overline{B}\left (x_0, \frac{\varepsilon }{2}\right )\right ] \ .\]
If we define the set $\displaystyle B^*=C+\frac{1-k}{\|x_0\|}x_0$, where $\displaystyle C=\overline{co}\left [B\backslash \overline{B}\left (x_0, \frac{\varepsilon }{2}\right )\right ]$, then
\begin{equation}{\label{th2.2}}
1-d(0,B^*)<k(\Delta _{X,\phi }(\varepsilon)+\eta) \ .
\end{equation}
The set $B^*\subset \overline{B}_X$ is closed and convex and we have that $\phi (B^*)=\phi (C)=\phi (B)> k\varepsilon$.
If we take the infimum over all the sets $B^*$, $\phi (B^*)> k\varepsilon$, in inequality (\ref{th2.2}) we get that
\[\Delta_{X,\phi}(k\varepsilon)\leq k(\Delta_{X,\phi}(\varepsilon)+\eta) \ ,\]
which proves the theorem due to $\eta >0$ being arbitrary.
\end{proof}

Several additional properties of the modulus $\Delta_{X,\phi}$  arise from Theorem \ref{TeoremSubhomogenost}.

\begin{lemma}
Let $\phi$ be a minimalizable measure of noncompactness defined on a Banach space $X$ with the Radon-Nikodym property. Then the modulus  $\Delta_{X,\phi}(\varepsilon)$ is a strictly increasing function on $[\varepsilon_{\phi}(X),\phi(\overline{B}_X)]$.
\end{lemma}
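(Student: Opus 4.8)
The plan is to exploit subhomogeneity (Theorem \ref{TeoremSubhomogenost}) together with the fact that $\Delta_{X,\phi}$ is nondecreasing and vanishes exactly on $[0,\varepsilon_\phi(X)]$. Suppose, for contradiction, that $\Delta_{X,\phi}$ fails to be strictly increasing on $[\varepsilon_\phi(X),\phi(\overline{B}_X)]$. Then there exist $\varepsilon_\phi(X)\le \varepsilon_1 < \varepsilon_2 \le \phi(\overline{B}_X)$ with $\Delta_{X,\phi}(\varepsilon_1)=\Delta_{X,\phi}(\varepsilon_2)=:c$. Since $\varepsilon_2 > \varepsilon_\phi(X)$, by definition of the characteristic $\varepsilon_\phi(X)$ we have $c=\Delta_{X,\phi}(\varepsilon_2)>0$; otherwise $\varepsilon_2$ would belong to the set over which $\Delta_{X,\phi}$ is zero, contradicting $\varepsilon_2>\varepsilon_\phi(X)$.

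Next I would apply subhomogeneity with the ratio $k=\varepsilon_1/\varepsilon_2\in[0,1)$. Writing $\varepsilon_1 = k\varepsilon_2$, Theorem \ref{TeoremSubhomogenost} gives
\[
\Delta_{X,\phi}(\varepsilon_1) = \Delta_{X,\phi}(k\varepsilon_2) \le k\,\Delta_{X,\phi}(\varepsilon_2) = k c \ .
\]
But $\Delta_{X,\phi}(\varepsilon_1)=c$, so $c \le kc$, i.e. $c(1-k)\le 0$. Since $c>0$ and $1-k>0$ (because $k<1$), this is a contradiction. Hence no such pair $\varepsilon_1<\varepsilon_2$ exists, and $\Delta_{X,\phi}$ is strictly increasing on $[\varepsilon_\phi(X),\phi(\overline{B}_X)]$.

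The only delicate point is the claim that $\Delta_{X,\phi}(\varepsilon)>0$ for every $\varepsilon$ with $\varepsilon_\phi(X)<\varepsilon\le\phi(\overline{B}_X)$; this is exactly the content of the supremum in Definition \ref{convexityDefn} together with monotonicity of $\Delta_{X,\phi}$ (which follows immediately from the definition of the modulus, since enlarging $\varepsilon$ shrinks the admissible family of sets $A$). So the main obstacle, such as it is, lies in carefully justifying that the supremum defining $\varepsilon_\phi(X)$ is not attained strictly above any point where $\Delta_{X,\phi}$ already takes a positive value — but this is routine once monotonicity is noted. Everything else is a one-line application of subhomogeneity with a scaling factor strictly less than $1$.
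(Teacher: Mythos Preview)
Your argument is correct and follows essentially the same route as the paper: both apply Theorem~\ref{TeoremSubhomogenost} with the scaling factor $k=\varepsilon_1/\varepsilon_2<1$ and use that $\Delta_{X,\phi}(\varepsilon_2)>0$ for $\varepsilon_2>\varepsilon_\phi(X)$ to obtain the strict inequality $\Delta_{X,\phi}(\varepsilon_1)\le k\,\Delta_{X,\phi}(\varepsilon_2)<\Delta_{X,\phi}(\varepsilon_2)$. The paper states this directly in one line, whereas you phrase it by contradiction and are somewhat more explicit about the auxiliary facts (monotonicity and positivity past $\varepsilon_\phi(X)$), but the substance is the same.
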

\begin{proof}
Let $\varepsilon_1$, $\varepsilon_2\in [\varepsilon_{\phi}(X),\phi(\overline{B}_X)]$ and $\varepsilon_1<\varepsilon_2$. If we put $\displaystyle k=\frac{\varepsilon_1}{\varepsilon_2}<1$, using Theorem \ref{TeoremSubhomogenost} we have that
\[\Delta_{X,\phi}(\varepsilon_1)=\Delta_{X,\phi}(k\varepsilon_2)\leq k\Delta_{X,\phi}(\varepsilon_2)<\Delta_{X,\phi}(\varepsilon_2) \ .\]
\end{proof}
\begin{lemma}
Let $\phi$ be a minimalizable measure of noncompactness defined on a Banach space $X$ with the Radon-Nikodym property. For every $\varepsilon \in [0,\phi(\overline{B}_X)]$ we have that
\[\Delta_{X,\phi}(\varepsilon)\leq \varepsilon \ .\]
\end{lemma}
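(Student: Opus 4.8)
The plan is to derive this the same way the preceding lemma is derived, namely straight from the subhomogeneity in Theorem~\ref{TeoremSubhomogenost}. First I would dispose of the trivial cases. If $X$ is finite dimensional then $\phi(\overline{B}_X)=0$, so the interval reduces to $\{0\}$ and $\Delta_{X,\phi}(0)=0$ by Definition~\ref{modulusDefn}; hence assume $\phi(\overline{B}_X)>0$. The case $\varepsilon=0$ is again $\Delta_{X,\phi}(0)=0\le 0$, and if $\varepsilon\ge 1$ then $\Delta_{X,\phi}(\varepsilon)\le 1\le\varepsilon$, since for every admissible set $A$ in Definition~\ref{modulusDefn} one has $0\le d(0,A)\le 1$ ($A$ is nonempty and contained in $\overline{B}_X$), so $\Delta_{X,\phi}$ is $[0,1]$-valued. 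Thus it remains to treat $\varepsilon\in(0,\min\{1,\phi(\overline{B}_X)\}]$.

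The core step is a single application of Theorem~\ref{TeoremSubhomogenost}, taken with contraction factor $k=\varepsilon/\phi(\overline{B}_X)\in(0,1]$ and with $\phi(\overline{B}_X)$ in the role of the free variable:
\[
\Delta_{X,\phi}(\varepsilon)=\Delta_{X,\phi}\!\left(\frac{\varepsilon}{\phi(\overline{B}_X)}\cdot\phi(\overline{B}_X)\right)\le\frac{\varepsilon}{\phi(\overline{B}_X)}\,\Delta_{X,\phi}\bigl(\phi(\overline{B}_X)\bigr)\le\frac{\varepsilon}{\phi(\overline{B}_X)}\le\varepsilon,
\]
where the middle inequality uses that $\Delta_{X,\phi}\le 1$ throughout its domain and the last one uses $\phi(\overline{B}_X)\ge 1$ (which holds for the Kuratowski, Hausdorff and separation measures). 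If one prefers not to evaluate $\Delta_{X,\phi}$ at the right endpoint, the same bound follows by repeating the computation with $\phi(\overline{B}_X)$ replaced by an arbitrary $\varepsilon_0\in(\varepsilon,\phi(\overline{B}_X))$, which gives $\Delta_{X,\phi}(\varepsilon)\le\varepsilon/\varepsilon_0$, and then letting $\varepsilon_0\uparrow\phi(\overline{B}_X)$.

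The only genuinely delicate point here is the last inequality $\varepsilon/\phi(\overline{B}_X)\le\varepsilon$, i.e.\ the normalization $\phi(\overline{B}_X)\ge 1$; all the rest is bookkeeping. A route that avoids Theorem~\ref{TeoremSubhomogenost} altogether is a translation argument: given $\eta>0$, pick $f\in S_{X^{*}}$ and $x_f\in\overline{B}_X$ with $f(x_f)>1-\eta$, let $A_0$ be a convex closed subset of $\overline{B}(0,\varepsilon+\eta)\cap\{x:f(x)\ge 0\}$ with $\phi(A_0)>\varepsilon$, and set $A^{*}=A_0+(1-\varepsilon-\eta)\,x_f$. Then $A^{*}\subseteq\overline{B}_X$ is convex and closed, $\phi(A^{*})=\phi(A_0)>\varepsilon$ by translation invariance, and $f(y)\ge(1-\varepsilon-\eta)(1-\eta)$ for every $y\in A^{*}$, so $d(0,A^{*})\ge(1-\varepsilon-\eta)(1-\eta)$ and hence $\Delta_{X,\phi}(\varepsilon)\le 1-(1-\varepsilon-\eta)(1-\eta)$, which tends to $\varepsilon$ as $\eta\to 0^{+}$. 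The obstacle in that route is producing $A_0$ with $\phi(A_0)>\varepsilon$ for an arbitrary minimalizable $\phi$: taking $A_0$ to be the half-ball itself reduces this to an estimate of the type $\phi\bigl(\overline{B}(0,r)\cap\{f\ge 0\}\bigr)\ge r$, which is again essentially the normalization $\phi(\overline{B}_X)\ge 1$. For this reason I would present the short subhomogeneity proof above.
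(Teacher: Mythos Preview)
Your argument is correct and follows essentially the same route as the paper: split off the range $\varepsilon\ge 1$ using $\Delta_{X,\phi}\le 1$, and for $\varepsilon\in[0,1]$ apply Theorem~\ref{TeoremSubhomogenost}. The only cosmetic difference is the anchoring point: the paper applies subhomogeneity with $k=\varepsilon$ at the point $1$, obtaining $\Delta_{X,\phi}(\varepsilon)\le\varepsilon\,\Delta_{X,\phi}(1)\le\varepsilon$ directly, whereas you anchor at $\phi(\overline{B}_X)$ and then need the extra step $\varepsilon/\phi(\overline{B}_X)\le\varepsilon$; both versions tacitly use $\phi(\overline{B}_X)\ge 1$, a point you rightly flag and the paper leaves implicit.
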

\begin{proof}
If $\varepsilon \in [0,1]$, then using Theorem \ref{TeoremSubhomogenost}, switching the roles of $k$ and $\varepsilon$ around and putting $\varepsilon =1$, we have that
\[\Delta_{X,\phi}(\varepsilon)\leq \varepsilon \Delta_{X,\phi}(1)\leq \varepsilon \ .\]
If $\varepsilon \in (1,\phi(\overline{B}_X)]$, then due to monotonicity of the modulus $\Delta_{X,\phi}(\varepsilon)$ we have that
\[\Delta_{X,\phi}(\varepsilon)< \Delta_{X,\phi}(\phi(\overline{B}_X))=1<\varepsilon \ .\]
\end{proof}
\begin{lemma}
Let $\phi$ be a minimalizable measure of noncompactness defined on a Banach space $X$ with the Radon-Nikodym property. Then the function $\displaystyle f(\varepsilon)=\frac{\Delta_{X,\phi}(\varepsilon)}{\varepsilon}$ is nondecreasing on $[0,\phi(\overline{B}_X)]$ and for $\varepsilon_1+\varepsilon_2\leq \phi(\overline{B}_X)$ we have that
\begin{equation}{\label{poslj3nejednakost}}
\Delta_{X,\phi}(\varepsilon_1+\varepsilon_2)\geq \Delta_{X,\phi}(\varepsilon_1)+\Delta_{X,\phi}(\varepsilon_2) \ .
\end{equation}
\end{lemma}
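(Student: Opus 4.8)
The plan is to obtain both assertions as formal consequences of the subhomogeneity proved in Theorem~\ref{TeoremSubhomogenost}; no fresh appeal to the Radon--Nikodym property or to minimalizability is needed beyond what that theorem already absorbs.

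\emph{Monotonicity of $f$.} First I would fix $0<\varepsilon_1<\varepsilon_2\leq\phi(\overline{B}_X)$ and set $k=\varepsilon_1/\varepsilon_2\in(0,1)$. Applying Theorem~\ref{TeoremSubhomogenost} with this $k$ and with $\varepsilon_2$ in the role of $\varepsilon$ gives
\[
\Delta_{X,\phi}(\varepsilon_1)=\Delta_{X,\phi}(k\varepsilon_2)\leq k\,\Delta_{X,\phi}(\varepsilon_2)=\frac{\varepsilon_1}{\varepsilon_2}\,\Delta_{X,\phi}(\varepsilon_2),
\]
and dividing through by $\varepsilon_1>0$ yields $f(\varepsilon_1)\leq f(\varepsilon_2)$. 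Hence $f$ is nondecreasing on $(0,\phi(\overline{B}_X)]$; at the left endpoint $\varepsilon=0$ the quotient is read with the usual convention (or as $\lim_{\varepsilon\to 0^+}f(\varepsilon)$), and since $\Delta_{X,\phi}(0)=0$ by the previous lemma this causes no difficulty.

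\emph{Superadditivity.} Next I would take $\varepsilon_1,\varepsilon_2>0$ with $\varepsilon_1+\varepsilon_2\leq\phi(\overline{B}_X)$, the degenerate case in which one of them vanishes following at once from $\Delta_{X,\phi}(0)=0$ and the monotonicity of $\Delta_{X,\phi}$. Since $\varepsilon_1\leq\varepsilon_1+\varepsilon_2$ and $\varepsilon_2\leq\varepsilon_1+\varepsilon_2$, the monotonicity of $f$ just established gives
\[
\frac{\Delta_{X,\phi}(\varepsilon_1)}{\varepsilon_1}\leq\frac{\Delta_{X,\phi}(\varepsilon_1+\varepsilon_2)}{\varepsilon_1+\varepsilon_2}\qquad\text{and}\qquad\frac{\Delta_{X,\phi}(\varepsilon_2)}{\varepsilon_2}\leq\frac{\Delta_{X,\phi}(\varepsilon_1+\varepsilon_2)}{\varepsilon_1+\varepsilon_2}.
\]
Multiplying the first inequality by $\varepsilon_1$, the second by $\varepsilon_2$, and adding them produces
\[
\Delta_{X,\phi}(\varepsilon_1)+\Delta_{X,\phi}(\varepsilon_2)\leq\frac{\varepsilon_1+\varepsilon_2}{\varepsilon_1+\varepsilon_2}\,\Delta_{X,\phi}(\varepsilon_1+\varepsilon_2)=\Delta_{X,\phi}(\varepsilon_1+\varepsilon_2),
\]
which is precisely (\ref{poslj3nejednakost}).

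I do not expect any genuinely hard step here: the whole argument is a rephrasing of the inequality $\Delta_{X,\phi}(k\varepsilon)\leq k\Delta_{X,\phi}(\varepsilon)$ as monotonicity of the ``slope'' function $f$, followed by summing two instances of that monotonicity. The only points that merit a word of care are the meaning of $f$ at $\varepsilon=0$ and the cases $\varepsilon_1=0$ or $\varepsilon_2=0$, and both are disposed of by $\Delta_{X,\phi}(0)=0$ (the third lemma of this section) together with the fact that $\Delta_{X,\phi}$ is nondecreasing, which is immediate from Definition~\ref{modulusDefn}.
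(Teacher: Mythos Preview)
Your proof is correct and follows essentially the same route as the paper: the monotonicity of $f$ is derived identically from Theorem~\ref{TeoremSubhomogenost}, and the superadditivity is then read off from that monotonicity. The only cosmetic difference is that the paper bounds $\Delta_{X,\phi}(\varepsilon_1)\leq k\Delta_{X,\phi}(\varepsilon_2)$ directly (with $k=\varepsilon_1/\varepsilon_2$) and then uses $f(\varepsilon_2)\leq f(\varepsilon_1+\varepsilon_2)$ once, whereas you apply the monotonicity of $f$ symmetrically to both $\varepsilon_1$ and $\varepsilon_2$; your version is a touch cleaner and also handles the endpoint cases more explicitly.
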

\begin{proof}
Let $\varepsilon_1$, $\varepsilon_2\in [0,\phi (\overline{B}_X)]$ be such that $\varepsilon_1\leq \varepsilon_2$. Putting  $\displaystyle k=\frac{\varepsilon_1}{\varepsilon_2}$, we get that
\[f(\varepsilon_1)=\frac{\Delta_{X,\phi}(\varepsilon_1)}{\varepsilon_1}= \frac{\Delta_{X,\phi}(k\varepsilon_2)}{\varepsilon_1} \ .\]
Using the subhomogeneity of the function $\Delta_{X,\phi}(\varepsilon)$ we have that
\[f(\varepsilon_1)\leq \frac{\Delta_{X,\phi}(\varepsilon_2)}{\varepsilon_2}=f(\varepsilon_2) \ ,\]
which shows that $f(\varepsilon)$ is a nondecreasing function on $[0,\phi(B_X)]$. We also have that
\begin{eqnarray*}
\Delta_{X,\phi}(\varepsilon_1)+\Delta_{X,\phi}(\varepsilon_2) & \leq & k\Delta_{X,\phi}(\varepsilon_2)+\Delta_{X,\phi}(\varepsilon_2) \\
 & = & \frac{\varepsilon _1+\varepsilon_2}{\varepsilon_2}\Delta_{X,\phi}(\varepsilon_2) \\
 & \leq & (\varepsilon _1+\varepsilon_2)\frac{\Delta_{X,\phi}(\varepsilon_1+\varepsilon _2)}{\varepsilon_1+\varepsilon_2}\\
 & = & \Delta_{X,\phi}(\varepsilon_1+\varepsilon _2) \ ,
\end{eqnarray*}
which shows inequality (\ref{poslj3nejednakost}).
\end{proof}
\begin{lemma}
Let $\phi$ be a minimalizable measure of noncompactness on a Banach space $X$ with the Radon-Nikodym property. Then for all $\varepsilon_1,\varepsilon_2 \in(\varepsilon_1(X),$ $\phi(\overline{B}_X)]$,  such that $\varepsilon_1\leq \varepsilon_2$, we have that
\begin{equation}{\label{poslj4nejednakost}}
\frac{\Delta_{X,\phi}(\varepsilon_2)-\Delta_{X,\phi}(\varepsilon_1)}{\varepsilon_2-\varepsilon_1}\geq
\frac{\Delta_{X,\phi}(\varepsilon_2)}{\varepsilon_2} \ .
\end{equation}
\end{lemma}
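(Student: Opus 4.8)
The plan is to reduce the claimed chord-slope inequality (\ref{poslj4nejednakost}) entirely to the subhomogeneity of $\Delta_{X,\phi}$ proved in Theorem \ref{TeoremSubhomogenost}. Since the left-hand side of (\ref{poslj4nejednakost}) only makes sense when $\varepsilon_1 \neq \varepsilon_2$, I would treat the case $\varepsilon_1 < \varepsilon_2$. The hypothesis $\varepsilon_1 \in (\varepsilon_{\phi}(X), \phi(\overline{B}_X)]$ forces $\varepsilon_1 > 0$, so $k := \varepsilon_1/\varepsilon_2$ is a well-defined number in $(0,1)$, and $\varepsilon_2 \le \phi(\overline{B}_X)$, so the pair $(k,\varepsilon_2)$ is admissible in Theorem \ref{TeoremSubhomogenost}.

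First I would apply Theorem \ref{TeoremSubhomogenost} with this $k$ and with $\varepsilon = \varepsilon_2$, which gives
\[
\Delta_{X,\phi}(\varepsilon_1) = \Delta_{X,\phi}(k\varepsilon_2) \leq k\,\Delta_{X,\phi}(\varepsilon_2) = \frac{\varepsilon_1}{\varepsilon_2}\,\Delta_{X,\phi}(\varepsilon_2).
\]
Then I would rearrange this into
\[
\Delta_{X,\phi}(\varepsilon_2) - \Delta_{X,\phi}(\varepsilon_1) \geq \Delta_{X,\phi}(\varepsilon_2) - \frac{\varepsilon_1}{\varepsilon_2}\,\Delta_{X,\phi}(\varepsilon_2) = \frac{\varepsilon_2 - \varepsilon_1}{\varepsilon_2}\,\Delta_{X,\phi}(\varepsilon_2),
\]
and finish by dividing both sides by $\varepsilon_2 - \varepsilon_1 > 0$, which yields precisely (\ref{poslj4nejednakost}). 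Alternatively, one could simply invoke the monotonicity of $f(\varepsilon) = \Delta_{X,\phi}(\varepsilon)/\varepsilon$ established in the preceding lemma, since the inequality $\Delta_{X,\phi}(\varepsilon_1) \le \frac{\varepsilon_1}{\varepsilon_2}\Delta_{X,\phi}(\varepsilon_2)$ is exactly $f(\varepsilon_1) \le f(\varepsilon_2)$.

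There is essentially no obstacle here: the entire content sits in Theorem \ref{TeoremSubhomogenost}, and the rest is a one-line manipulation. The only points that deserve a word of care are checking admissibility of the parameters in the subhomogeneity statement ($k \le 1$ and $\varepsilon_2 \le \phi(\overline{B}_X)$), and observing that the restriction to $\varepsilon_i > \varepsilon_{\phi}(X)$ is what makes the right-hand side $\Delta_{X,\phi}(\varepsilon_2)/\varepsilon_2$ strictly positive, via the strict-monotonicity lemma above together with $\Delta_{X,\phi}(\varepsilon_{\phi}(X)) = 0$; this is not needed for the derivation but it is what renders the estimate a meaningful lower bound on genuine slopes, and it is the form in which the inequality will be used toward the continuity result of Section \ref{contSec}.
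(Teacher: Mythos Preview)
Your proof is correct and is essentially identical to the paper's own argument: set $k=\varepsilon_1/\varepsilon_2$, apply Theorem \ref{TeoremSubhomogenost} to get $\Delta_{X,\phi}(\varepsilon_1)\le k\,\Delta_{X,\phi}(\varepsilon_2)$, rearrange, and divide by $\varepsilon_2-\varepsilon_1$. The extra remarks on admissibility of $(k,\varepsilon_2)$ and on the role of the hypothesis $\varepsilon_i>\varepsilon_\phi(X)$ are fine contextual observations but are not used in the paper's proof either.
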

\begin{proof}
Let $\displaystyle k=\frac{\varepsilon_1}{\varepsilon_2}\leq 1$. Using Theorem \ref{TeoremSubhomogenost} we get that
\begin{eqnarray*}
\Delta_{X,\phi}(\varepsilon_2)-\Delta_{X,\phi}(\varepsilon_1) & = &
\Delta_{X,\phi}(\varepsilon_2)-\Delta_{X,\phi}(k\varepsilon_2) \\
& \geq & \Delta_{X,\phi}(\varepsilon_2)-k\Delta_{X,\phi}(\varepsilon_2) \\
& = & \frac{\varepsilon_2-\varepsilon_1}{\varepsilon_2}\Delta_{X,\phi}(\varepsilon_2),
\end{eqnarray*}
which shows the inequality (\ref{poslj4nejednakost}).
\end{proof}

\section{The continuity of the modulus of noncompact convexity}\label{contSec}
The main result of this paper is the following
\begin{theorem}\label{mainThm}
Let $\phi$ be a minimalizable measure of noncompactness and let $X$ be a Banach space with the Radon-Nikodym property. Then the modulus of noncompact convexity $\Delta_{X,\phi}(\varepsilon)$ is a continuous function on  $[0,\phi (\overline{B}_X))$.
\end{theorem}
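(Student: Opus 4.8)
The plan is to split the argument into right-continuity and left-continuity at each point, the former being essentially formal and the latter carrying all the weight. By Theorem~\ref{TeoremSubhomogenost} and the lemmas of Section~\ref{propertiesSec}, $\Delta_{X,\phi}$ is nondecreasing on $[0,\phi(\overline{B}_X)]$ and vanishes on $[0,\varepsilon_\phi(X))$; being monotone it has one-sided limits everywhere, it is trivially continuous on $[0,\varepsilon_\phi(X))$, and it suffices to prove that it is right-continuous at every $\varepsilon_0\in[0,\phi(\overline{B}_X))$ and left-continuous at every $\varepsilon_0\in(0,\phi(\overline{B}_X))$.

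Right-continuity is immediate from Definition~\ref{modulusDefn} and uses neither minimalizability nor the Radon-Nikodym property. Fix $\varepsilon_0<\phi(\overline{B}_X)$ and $\eta>0$. The family of sets admissible for $\Delta_{X,\phi}(\varepsilon_0)$ is nonempty, since $\overline{B}_X$ itself qualifies, so choose a closed convex $A\subseteq\overline{B}_X$ with $\phi(A)>\varepsilon_0$ and $1-d(0,A)<\Delta_{X,\phi}(\varepsilon_0)+\eta$. The same $A$ is admissible for $\Delta_{X,\phi}(\varepsilon')$ for every $\varepsilon'\in(\varepsilon_0,\phi(A))$, an interval having $\varepsilon_0$ as left endpoint, so $\Delta_{X,\phi}(\varepsilon')\le 1-d(0,A)<\Delta_{X,\phi}(\varepsilon_0)+\eta$ for all $\varepsilon'$ slightly above $\varepsilon_0$. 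Letting $\varepsilon'\downarrow\varepsilon_0$, then $\eta\downarrow 0$, and combining with monotonicity gives $\lim_{\varepsilon'\downarrow\varepsilon_0}\Delta_{X,\phi}(\varepsilon')=\Delta_{X,\phi}(\varepsilon_0)$.

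For left-continuity I would argue by contradiction. If it failed at some $\varepsilon_0$, there would be $\eta>0$ with $\Delta_{X,\phi}(\varepsilon)<\Delta_{X,\phi}(\varepsilon_0)-\eta$ for all $\varepsilon<\varepsilon_0$, so for each such $\varepsilon$ one may pick an admissible closed convex $A_\varepsilon\subseteq\overline{B}_X$ with $\phi(A_\varepsilon)>\varepsilon$ and $1-d(0,A_\varepsilon)<\Delta_{X,\phi}(\varepsilon_0)-\eta$; necessarily $\phi(A_\varepsilon)\le\varepsilon_0$, since otherwise $A_\varepsilon$ would be admissible for $\Delta_{X,\phi}(\varepsilon_0)$ and force $\Delta_{X,\phi}(\varepsilon_0)\le 1-d(0,A_\varepsilon)<\Delta_{X,\phi}(\varepsilon_0)-\eta$. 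Fixing $\varepsilon$ close to $\varepsilon_0$, so that $\phi(A_\varepsilon)$ is squeezed into a thin band just below $\varepsilon_0$, and writing $A=A_\varepsilon$, the goal is to fabricate from $A$ a closed convex $A^*\subseteq\overline{B}_X$ with $\phi(A^*)>\varepsilon_0$ and $1-d(0,A^*)<\Delta_{X,\phi}(\varepsilon_0)$, which contradicts the definition of $\Delta_{X,\phi}(\varepsilon_0)$. I would construct $A^*$ following the scheme in the proof of Theorem~\ref{TeoremSubhomogenost}: by minimalizability, pass to a $\phi$-minimal infinite $B\subseteq A$ with $\phi(B)\ge\phi(A)-\gamma$ for a small $\gamma>0$; by the Radon-Nikodym property applied to $B$, pick $x_0\in B$ with $x_0\notin C:=\overline{co}\,[B\setminus\overline{B}(x_0,\rho)]$ for a small $\rho>0$, noting that by $\phi$-minimality of $B$ (the excised ball cannot carry the whole measure once $\rho$ is small) one has $\phi(C)=\phi(B)$; finally, modify $C$ by a slight dilation composed with a translation in the direction of $x_0$, the dilation factor $\lambda=\varepsilon_0/\phi(C)>1$ being chosen to push the measure strictly above $\varepsilon_0$ and the translation being chosen to bring the dilated set back inside $\overline{B}_X$ at a cost of strictly less than $\eta$ in the distance to the origin. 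Sending $\varepsilon\uparrow\varepsilon_0$, $\gamma\downarrow 0$ and $\rho\downarrow 0$ drives all these perturbations to zero and produces the required $A^*$.

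The hard part is exactly this last construction. In the proof of Theorem~\ref{TeoremSubhomogenost} the contraction factor $k\le 1$ automatically created room of width $1-k$ into which the modified set could be translated while staying inside $\overline{B}_X$; here the measure must instead be \emph{increased} past $\varepsilon_0$, which forces a genuine dilation by $\lambda>1$, and the delicate point is to verify that the dented set $C$ — which, by Hahn--Banach, lies in a proper slice of $\overline{B}_X$ cut away from $x_0$ — can be dilated by $\lambda$ and shifted back into $\overline{B}_X$ without surrendering more than $\eta$ of $d(0,\cdot)$. Making this quantitative requires a lower bound on the separation of $x_0$ from $C$ (so that $\lambda=\varepsilon_0/\phi(C)$, which is close to $1$ because $\phi(C)$ is close to $\varepsilon_0$, still leaves the set inside $\overline{B}_X$ after the compensating translation) together with careful coordination of the three small parameters $\varepsilon_0-\varepsilon$, $\gamma$ and $\rho$; the degenerate case $x_0=0$ is disposed of separately. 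The construction also uses, exactly as in the proof of Theorem~\ref{TeoremSubhomogenost}, the invariance of $\phi$ under passage to the closed convex hull. Everything else — the monotonicity reductions, right-continuity, and the $\eta$-bookkeeping — is routine.
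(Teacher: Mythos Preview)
Your treatment of right-continuity is correct and in fact cleaner than the paper's: because Definition~\ref{modulusDefn} uses the strict inequality $\phi(A)>\varepsilon$, any set admissible at $\varepsilon_0$ remains admissible on the whole interval $(\varepsilon_0,\phi(A))$, and right-continuity drops out with no appeal to minimalizability or the Radon--Nikodym property. The paper's Lemma~\ref{contAbove} runs the full denting/minimality construction for this direction as well, which is unnecessary.

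For left-continuity your sketch diverges from the paper at the decisive step, and the gap you flag is real. You propose to dilate $C$ by a factor $\lambda>1$ and then \emph{translate} the result back into $\overline{B}_X$. Even granting that $C$ sits in a slice separated from $x_0$, a dilated copy $\lambda C$ can protrude from $\overline{B}_X$ in many directions simultaneously, and there is no reason a single translation vector of norm controlled by $\eta$ should repair all of them; the Hahn--Banach separation gives control in one direction only. (Minor point: $\lambda=\varepsilon_0/\phi(C)$ yields $\phi(\lambda C)=\varepsilon_0$, not $>\varepsilon_0$.) The paper's Lemma~\ref{contBelow} sidesteps this geometry entirely: it dilates by the fixed factor $k=1+\tfrac12(1-d(0,C))$ and then \emph{intersects} with $\overline{B}_X$, setting $A^*=kC\cap\overline{B}_X$. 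Containment in $\overline{B}_X$ is then automatic, and $d(0,A^*)\ge d(0,kC)=k\,d(0,C)\ge d(0,C)$ gives the distance estimate for free. The price is that intersecting with $\overline{B}_X$ could in principle drop the measure; the paper absorbs this by invoking $\phi$-minimality of $kB$ a \emph{second} time, concluding $\phi(A^*)=\phi(kB)=k\phi(B)$. It is this second use of minimality---after intersecting, not just after denting---together with the choice of $k$ tied to $1-d(0,C)$ rather than to $\varepsilon_0/\phi(C)$, that makes the $\delta$-window $(\varepsilon_0-\delta,\varepsilon_0)$ with $\delta=\tfrac{\varepsilon_0}{2}(1-1/k)$ close up. That is the idea your outline is missing.
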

The proof of Theorem \ref{mainThm} follows from two lemmas that deal with the continuity of the modulus $\Delta_{X,\phi}(\varepsilon)$ from below and above respectively, i.e. Lemma \ref{contBelow} and Lemma \ref{contAbove}.
\begin{lemma}\label{contBelow}
Let $\phi$ be a minimalizable measure of compactness and let $X$ be a Banach space with the Radon-Nikodym property. Then the modulus of noncompact convexity $\Delta_{X,\phi}(\varepsilon)$ is a continuous function from below on $[0,\phi (\overline{B}_X))$.
\end{lemma}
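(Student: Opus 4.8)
The plan is to use the monotonicity of $\Delta_{X,\phi}$ to reduce continuity from below at a point $\varepsilon_0\in[0,\phi(\overline{B}_X))$ to a comparison between $\Delta_{X,\phi}(\varepsilon_0)$ and the value obtained when the strict inequality $\phi(A)>\varepsilon_0$ in Definition \ref{modulusDefn} is weakened to $\phi(A)\ge\varepsilon_0$, and then to bridge the remaining gap by an enlargement construction modelled on the proof of Theorem \ref{TeoremSubhomogenost}.

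First I would record that $\Delta_{X,\phi}$ is nondecreasing: if $\varepsilon<\varepsilon'$, then every closed convex $A\subseteq\overline{B}_X$ with $\phi(A)>\varepsilon'$ is also admissible in the infimum defining $\Delta_{X,\phi}(\varepsilon)$. Consequently the left limit $\Delta_{X,\phi}(\varepsilon_0-0)$ exists and $\Delta_{X,\phi}(\varepsilon_0-0)\le\Delta_{X,\phi}(\varepsilon_0)$, so only the reverse inequality needs proof (and there is nothing to do when $\Delta_{X,\phi}$ vanishes on a left neighbourhood of $\varepsilon_0$). Next I would introduce
\[
\widetilde{\Delta}_{X,\phi}(\varepsilon)=\inf\bigl\{\,1-d(0,A):A\subseteq\overline{B}_X,\ A=coA=\overline{A},\ \phi(A)\ge\varepsilon\,\bigr\},
\]
and show $\Delta_{X,\phi}(\varepsilon_0-0)=\widetilde{\Delta}_{X,\phi}(\varepsilon_0)$: on the one hand any $A$ admissible for $\widetilde{\Delta}_{X,\phi}(\varepsilon_0)$ is admissible for $\Delta_{X,\phi}(\varepsilon)$ whenever $\varepsilon<\varepsilon_0$, which gives $\widetilde{\Delta}_{X,\phi}(\varepsilon_0)\ge\Delta_{X,\phi}(\varepsilon)$ for all such $\varepsilon$ and hence $\widetilde{\Delta}_{X,\phi}(\varepsilon_0)\ge\Delta_{X,\phi}(\varepsilon_0-0)$; on the other hand, approximating $\widetilde{\Delta}_{X,\phi}(\varepsilon_0)$ by a set $A$ with $\phi(A)\ge\varepsilon_0$ and again using that $A$ is admissible for every $\Delta_{X,\phi}(\varepsilon)$ with $\varepsilon<\varepsilon_0$ yields the opposite inequality. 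Since trivially $\widetilde{\Delta}_{X,\phi}(\varepsilon_0)\le\Delta_{X,\phi}(\varepsilon_0)$, it then remains to establish $\widetilde{\Delta}_{X,\phi}(\varepsilon_0)\ge\Delta_{X,\phi}(\varepsilon_0)$.

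The core of the argument would be the following enlargement: given a closed convex $A\subseteq\overline{B}_X$ with $\phi(A)\ge\varepsilon_0$ and an $\eta>0$, produce a closed convex $A'\subseteq\overline{B}_X$ with $\phi(A')>\varepsilon_0$ and $d(0,A')\ge d(0,A)-\eta$; passing to the infimum over all such $A$ then gives $\Delta_{X,\phi}(\varepsilon_0)\le\widetilde{\Delta}_{X,\phi}(\varepsilon_0)$. If $\phi(A)>\varepsilon_0$ one takes $A'=A$. If $\phi(A)=\varepsilon_0$, I would proceed exactly as in the proof of Theorem \ref{TeoremSubhomogenost}: for a small $\delta>0$ use the minimalizability of $\phi$ to extract an infinite $\phi$-minimal set $B\subseteq A$ with $\phi(B)\ge\varepsilon_0-\delta$, use the Radon-Nikodym property (Definition \ref{RNDefn}) to pick $x_0\in B$ with $x_0\notin C:=\overline{co}\bigl[B\backslash\overline{B}(x_0,\rho)\bigr]$ for a suitable $\rho=\rho(\eta)>0$, and note that $\phi(C)=\phi(B)\ge\varepsilon_0-\delta$ because $B$ is $\phi$-minimal. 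Removing the part of $B$ near $x_0$ frees up room around $x_0$; I would then take $A'$ to be a dilation of $C$ by a factor $\lambda>1$ slightly larger than $\varepsilon_0/(\varepsilon_0-\delta)$, followed by a small translation towards $x_0$ (the analogue of the set $B^{\ast}=C+\frac{1-k}{\|x_0\|}x_0$ appearing there). Then $\phi(A')=\lambda\,\phi(C)\ge\lambda(\varepsilon_0-\delta)>\varepsilon_0$, while for $\rho$ and $\delta$ chosen small enough the translation into the region vacated near $x_0$ keeps $A'\subseteq\overline{B}_X$ and costs at most $\eta$ in the distance to the origin.

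The main difficulty will be precisely this construction. In Theorem \ref{TeoremSubhomogenost} the preliminary scaling by $k<1$ put the auxiliary set inside the ball $\overline{B}(0,k)$ and so provided a uniform margin $1-k$ in every direction; here the room is created only locally near the point $x_0$ furnished by the Radon-Nikodym property, so the dilation factor $\lambda$, the radius $\rho$ of the removed part of $B$, and the loss $\delta$ in the measure have to be balanced against one another carefully --- using the functional separating $x_0$ from the closed convex set $C$ to locate the available room --- in order that $A'$ remain in $\overline{B}_X$ while still gaining $\phi(A')>\varepsilon_0$ and losing no more than $\eta$ in $d(0,\cdot)$. Everything else is routine: once the enlargement is available, combining it with the two reductions above yields $\Delta_{X,\phi}(\varepsilon_0-0)=\Delta_{X,\phi}(\varepsilon_0)$ for every $\varepsilon_0\in[0,\phi(\overline{B}_X))$, which is the claim.
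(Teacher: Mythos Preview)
Your reduction via $\widetilde\Delta_{X,\phi}$ does not work. Both arguments you give for the equality $\Delta_{X,\phi}(\varepsilon_0-0)=\widetilde\Delta_{X,\phi}(\varepsilon_0)$ establish the \emph{same} direction: if $\phi(A)\ge\varepsilon_0$ then $A$ is admissible for $\Delta_{X,\phi}(\varepsilon)$ whenever $\varepsilon<\varepsilon_0$, which only yields $\Delta_{X,\phi}(\varepsilon)\le\widetilde\Delta_{X,\phi}(\varepsilon_0)$ and hence $\Delta_{X,\phi}(\varepsilon_0-0)\le\widetilde\Delta_{X,\phi}(\varepsilon_0)$. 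The reverse inequality $\Delta_{X,\phi}(\varepsilon_0-0)\ge\widetilde\Delta_{X,\phi}(\varepsilon_0)$ is not addressed, and it is precisely this direction that carries the content of the lemma: a near-optimizer $A$ for $\Delta_{X,\phi}(\varepsilon)$ with $\varepsilon<\varepsilon_0$ only satisfies $\phi(A)>\varepsilon$, and nothing forbids $\phi(A)<\varepsilon_0$, so such an $A$ need not be admissible for $\widetilde\Delta_{X,\phi}(\varepsilon_0)$. Your enlargement step, as stated, takes as input a set with $\phi(A)\ge\varepsilon_0$ and outputs one with $\phi(A')>\varepsilon_0$; this proves $\widetilde\Delta_{X,\phi}(\varepsilon_0)=\Delta_{X,\phi}(\varepsilon_0)$, which combined with the only inequality you have, $\Delta_{X,\phi}(\varepsilon_0-0)\le\widetilde\Delta_{X,\phi}(\varepsilon_0)$, just reproduces monotonicity.

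What the paper does instead is apply the enlargement \emph{directly} to a near-optimizer $A$ for $\Delta_{X,\phi}(\varepsilon)$ with $\varepsilon<\varepsilon_0$ and $\varepsilon<\phi(A)<\varepsilon_0$, producing $A^*$ with $\phi(A^*)>\varepsilon_0$ and $1-d(0,A^*)<\Delta_{X,\phi}(\varepsilon)+\eta$; this gives $\Delta_{X,\phi}(\varepsilon_0)\le\Delta_{X,\phi}(\varepsilon)+\eta$ for $\varepsilon$ in a left neighbourhood of $\varepsilon_0$. The paper's enlargement is also simpler than yours: after passing to a $\phi$-minimal $B\subset A$ and forming $C=\overline{co}(B\setminus\overline{B}(x_0,r))$ via the Radon--Nikodym property, one sets $k=1+\tfrac{1}{2}(1-d(0,C))$ and takes $A^*=kC\cap\overline{B}_X$. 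The intersection with $\overline{B}_X$ disposes of the containment problem you flag as the ``main difficulty'', and the measure is read off from the $\phi$-minimality of $kB$: since $A^*\subset kB$ and $kB$ is $\phi$-minimal, $\phi(A^*)=k\phi(B)$. No translation, and no balancing of $\lambda,\rho,\delta$, is required.
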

\begin{proof}
Let $\varepsilon_0\in [0,\phi (\overline{B}_X))$ be arbitrary and let $\varepsilon <\varepsilon_0$. By Definition \ref{modulusDefn}, for arbitrary $\eta >0$ there exists a convex and closed subset $A\subset \overline{B}_X$, $\phi (A)> \varepsilon$ such that
\[1-d(0,A)<\Delta_{X,\phi}(\varepsilon)+\eta \ .\]
If $ \phi (A)\geq \varepsilon_0>\varepsilon$, then clearly
\[\inf \{ 1-d(0,A): A=\overline{co}A, A\subset \overline{B}_X, \phi (A)\geq \varepsilon _0\} \leq \Delta_{X,\phi}(\varepsilon)+\eta \ ,\]
i.e.
\[\Delta_{X,\phi}(\varepsilon_0)\leq \Delta_{X,\phi}(\varepsilon)+\eta \ ,\]
 whence, due to $\eta >0$ being arbitrary, we complete the proof. Therefore, let $\varepsilon < \phi (A)<\varepsilon_0$ and $\gamma
=\varepsilon _0- \phi (A)>0$. Since the measure of noncompactness  $\phi$ is minimalizable,  for arbitrary $\gamma >0$, there exists an infinite $\phi$-minimal subset $B\subset A$ such that
\[\phi (B)\geq \phi (A)-\gamma =2\phi (A)-\varepsilon_0 \ .\]
Let $n_0\in \mathbb{N}$ be such that $\displaystyle \frac{2\phi (A)-\varepsilon_0}{n_0}<\frac{diam B}{2}$. Since $B\subset \overline{B}_X$ is a bounded subset of the space $X$ with the Radon-Nikodym property, for $\displaystyle r=\frac{2\phi (A)-\varepsilon_0}{n_0}$ there exists $x_0\in B$ such that
\[x_0\notin \overline{co}\left (B\backslash \overline{B}(x_0,r)\right ).\]
Let $C=\overline{co}\left (B\backslash \overline{B}(x_0,r)\right )$. The set $C\subset B$ is convex and closed and we have that $1-d(0,C)\leq 1-d(0,B)\leq 1-d(0,A)<\Delta_{X,\phi}(\varepsilon)+\eta .$ Besides, since $B$ is a $\phi$-minimal set, we have that
\[\phi (C)=\phi (B)\geq 2\phi (A)-\varepsilon _0 \ .\]
Let $\displaystyle k=1+\frac{1-d(0,C)}{2}$ and let us look at the set $\displaystyle A^*=kC \cap \overline{B}_X$. $A^*$ is closed and convex and we have that $A^*\subseteq kC\subset kB$ and hence
\[1-d(0,A^*)\leq 1-d(0,kC)<1-d(0,C)<\Delta_{X,\phi}(\varepsilon)+\eta \ .\]
Since $B$ is $\phi$-minimal so is $kB$ and hence
\[\phi (A^*)=\phi (kB)=k\phi (B)\geq k(2\phi (A)-\varepsilon_0) \ .\]
Put $\displaystyle \delta =\frac{\varepsilon_0}{2}\left(1-\frac{1}{k}\right )$. Then for arbitrary $\varepsilon \in (\varepsilon_0-\delta, \varepsilon_0)$ we have that
\[\phi (A^*)\geq k(2\varepsilon-\varepsilon _0)>k(2(\varepsilon_0-\delta)-\varepsilon _0)=\varepsilon_0 \ .\]
Therefore
\[\inf \{ 1-d(0,A^*): A^*\subset \overline{B}_X, A^*=\overline{co}A^*, \phi (A^*)>\varepsilon _0\} \leq \Delta_{X,\phi}(\varepsilon)+\eta \]
and hence
\[\Delta_{X,\phi}(\varepsilon_0)\leq \Delta_{X,\phi}(\varepsilon)+\eta \ ,\]
whence due to $\eta >0$ being arbitrary, we have that
\[\displaystyle \lim_{\varepsilon \rightarrow \varepsilon_{0_-}} \Delta_{X,\phi}(\varepsilon)=\Delta_{X,\phi}(\varepsilon_0) \ .\]
\end{proof}

\begin{lemma}\label{contAbove}
Let $\phi $ be a minimalizable measure of noncompactness and let $X$ be a Banach space with the Radon-Nikodym property. Then the modulus of noncompact convexity $\Delta_{X,\phi}(\varepsilon)$ is a function continuous from above on $[0,\phi(\overline{B}_X))$.
\end{lemma}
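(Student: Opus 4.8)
The plan is to exploit the \emph{strict} inequality $\phi(A)>\varepsilon$ in Definition~\ref{modulusDefn}: a set that is admissible in the infimum defining $\Delta_{X,\phi}(\varepsilon_0)$ stays admissible for every $\varepsilon$ in a right neighbourhood of $\varepsilon_0$, so the one-sided estimate needed here is almost immediate. I would begin by recording monotonicity directly from the definition: if $\varepsilon<\varepsilon'$, then every closed convex $A\subseteq\overline{B}_X$ with $\phi(A)>\varepsilon'$ also satisfies $\phi(A)>\varepsilon$, so the infimum defining $\Delta_{X,\phi}(\varepsilon)$ runs over a larger family and hence $\Delta_{X,\phi}(\varepsilon)\le\Delta_{X,\phi}(\varepsilon')$. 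Consequently, for each $\varepsilon_0\in[0,\phi(\overline{B}_X))$ the right limit $L:=\lim_{\varepsilon\to\varepsilon_0^+}\Delta_{X,\phi}(\varepsilon)$ exists and satisfies $L\ge\Delta_{X,\phi}(\varepsilon_0)$; it therefore remains only to establish the reverse inequality $L\le\Delta_{X,\phi}(\varepsilon_0)$.

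For that reverse inequality, fix $\varepsilon_0\in[0,\phi(\overline{B}_X))$ and $\eta>0$. By Definition~\ref{modulusDefn} there is a closed convex set $A_0\subseteq\overline{B}_X$ with $\phi(A_0)>\varepsilon_0$ and $1-d(0,A_0)<\Delta_{X,\phi}(\varepsilon_0)+\eta$ (such a set exists, since $\overline{B}_X$ itself satisfies $\phi(\overline{B}_X)>\varepsilon_0$). Since $\phi(A_0)>\varepsilon_0$, for every $\varepsilon$ with $\varepsilon_0<\varepsilon<\phi(A_0)$ we still have $\phi(A_0)>\varepsilon$, so $A_0$ is admissible in the infimum defining $\Delta_{X,\phi}(\varepsilon)$, and therefore $\Delta_{X,\phi}(\varepsilon)\le 1-d(0,A_0)<\Delta_{X,\phi}(\varepsilon_0)+\eta$ for all such $\varepsilon$. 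Letting $\varepsilon\to\varepsilon_0^+$ gives $L\le\Delta_{X,\phi}(\varepsilon_0)+\eta$, and since $\eta>0$ is arbitrary, $L\le\Delta_{X,\phi}(\varepsilon_0)$. Combining this with the first paragraph yields $\lim_{\varepsilon\to\varepsilon_0^+}\Delta_{X,\phi}(\varepsilon)=\Delta_{X,\phi}(\varepsilon_0)$, i.e.\ continuity from above at $\varepsilon_0$; as $\varepsilon_0\in[0,\phi(\overline{B}_X))$ was arbitrary, the lemma follows, and together with Lemma~\ref{contBelow} it completes the proof of Theorem~\ref{mainThm}.

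The point to be careful about — rather than a genuine obstacle — is that this direction does not appear to use the Radon--Nikodym property or minimalizability of $\phi$ at all; the only subtlety is to phrase the competitor argument with the \emph{strict} inequality $\phi(A_0)>\varepsilon$, observing that a near-optimal competitor for $\Delta_{X,\phi}(\varepsilon_0)$ need only remain \emph{admissible} (not near-optimal) for nearby larger $\varepsilon$. One could alternatively try to extract an upper bound from subhomogeneity (Theorem~\ref{TeoremSubhomogenost}), but writing $\varepsilon_0=k\varepsilon$ with $k=\varepsilon_0/\varepsilon<1$ only produces $\Delta_{X,\phi}(\varepsilon)\ge(\varepsilon/\varepsilon_0)\,\Delta_{X,\phi}(\varepsilon_0)$, which bounds $\Delta_{X,\phi}(\varepsilon)$ from below and hence is of no help; the direct competitor argument above is the natural route.
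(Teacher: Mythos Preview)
Your argument is correct. The strict inequality $\phi(A)>\varepsilon$ in Definition~\ref{modulusDefn} is exactly what makes right continuity almost automatic: a near-optimal competitor $A_0$ for $\Delta_{X,\phi}(\varepsilon_0)$ has $\phi(A_0)>\varepsilon_0$, hence remains admissible for every $\varepsilon\in(\varepsilon_0,\phi(A_0))$, and the bound $\Delta_{X,\phi}(\varepsilon)\le 1-d(0,A_0)<\Delta_{X,\phi}(\varepsilon_0)+\eta$ follows. Combined with the easy monotonicity observation, this yields the right limit without any further hypothesis.

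Your route is genuinely different from, and considerably simpler than, the paper's. The paper reproduces for Lemma~\ref{contAbove} essentially the same construction used in Lemma~\ref{contBelow}: from $A$ it passes, via minimalizability, to a $\phi$-minimal subset $B$; via the Radon--Nikodym property it extracts a closed convex $C=\overline{co}\bigl(B\setminus\overline{B}(x_0,r)\bigr)$; and then it dilates by a factor $k=1+\tfrac{1}{2}(1-d(0,C))$ to produce $A^*=kC\cap\overline{B}_X$ with strictly larger $\phi$-measure, so that $\phi(A^*)>\varepsilon$ for $\varepsilon$ in a right neighbourhood of $\varepsilon_0$. That machinery is essential for left continuity (where one must \emph{raise} $\phi$ above the target $\varepsilon_0$ starting from a set with $\phi(A)$ possibly only slightly above the smaller $\varepsilon$), but for right continuity it is unnecessary: the competitor one already has at $\varepsilon_0$ suffices. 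Your closing remark that neither minimalizability nor the Radon--Nikodym property is needed for this direction is therefore accurate; what the paper's approach buys is a uniform template for both halves of Theorem~\ref{mainThm}, at the cost of obscuring that one half is elementary.
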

\begin{proof}
Let $\eta >0$ and $\varepsilon _0\in [0,\phi (\overline{B}_X))$. By definition \ref{modulusDefn} there exists a convex and closed subset $A\subset \overline{B}_X$, $\phi (A)> \varepsilon _0$, such that
\[1-d(0,A)<\Delta_{X,\phi}(\varepsilon _0)+\eta \ .\]
Since the measure of noncompactness  $\phi$ is minimalizable, for arbitrary $\gamma >0$, there exists an infinite $\phi$-minimal subset $B\subset A$, such that $\phi (B)\geq \phi (A)-\gamma$. Let $n_0\in \mathbb{N}$ be such that $\displaystyle \frac{\phi (A)-\gamma}{n_0}<\frac{diam B}{2}$. Using the Radon-Nikodym property of $X$ for the set $B$ and $\displaystyle r=\frac{\phi (A)-\gamma}{n_0}$, there exists $x_0\in B$  such that
\[x_0\notin \overline{co}\left (B\backslash \overline{B}(x_0,r)\right ) \ .\]
In this case the set $C=\overline{co}\left (B\backslash \overline{B}(x_0,r)\right )$ is a convex and closed subset of the set $B$ for which we have \[1-d(0,C)<\Delta_{X,\phi}(\varepsilon _0)+\eta \] and \[\phi (C)=\phi (B)\geq \phi (A)-\gamma \ .\]
Let $\displaystyle k=1+\frac{1-d(0,C)}{2}$. The set $A^*=kC\cap \overline{B}_X$ is a convex and closed set such that  \[1-d(0,A^*)<\Delta_{X,\phi}(\varepsilon _0)+\eta \ ,\]
and $\phi (A^*)=\phi (kB)=k\phi (B)\geq k(\phi (A)-\gamma)$. \\
Let $\displaystyle \varepsilon '\in \left ( \frac{2\varepsilon _0}{3-d(0,C)},\varepsilon _0\right )$ be arbitrary and let $\gamma =\varepsilon _0-\varepsilon '$. Using the above, there exists a subset $B\subset A$ such that
\[\phi (B)\geq \phi (A)-\varepsilon _0+\varepsilon '> \varepsilon ' \ .\]
Hence for the set $A^*$ we have that
\[\phi (A^*)=k\phi (B)> k\varepsilon ' \ .\]
If we choose $\delta =k\varepsilon '-\varepsilon _0$, then for an arbitrary $\varepsilon \in (\varepsilon_0, \varepsilon _0+\delta)$ we have that
\[\phi (A^*)> k\varepsilon '>\varepsilon \ .\]
Therefore
\[\inf \{ 1-d(0,A^*): A^*\subset \overline{B}_X, A^*=\overline{co}A^*, \phi (A^*)>\varepsilon \} \leq \Delta_{X,\phi}(\varepsilon_0)+\eta \ ,\] i.e.
\[\displaystyle \lim_{\varepsilon \rightarrow \varepsilon_{0}+} \Delta_{X,\phi}(\varepsilon)=\Delta_{X,\phi}(\varepsilon_0) \ .\]
\end{proof}

\section{Discussion}\label{discussionSec}
It is known that the separation measure of noncompactness $\beta$ is a minimalizable measure on a complete metric space. Therefore it is minimalizable on Banach spaces with the Radon-Nikodym property. Using Theorem \ref{mainThm} we can conclude that the modulus $\Delta_{X,\beta}(\varepsilon)$ with respect to the separation measure $\beta$ is a continuous function on $[0,\phi (\overline{B}_X))$ for a Banach space $X$ with the Radon-Nikodym property.
For example, since the spaces $l_p$ and $L_p$ ($1<p<+\infty$) are reflexive, they are with Radon-Nikodym property. Therefore  $\Delta_{l_p,\beta}(\varepsilon)$ and $\Delta_{L_p,\beta}(\varepsilon)$ are continuous functions on  $\left[0,\phi \left(\overline{B}_{l_p}\right)\right)$ and $\left[0,\phi \left(\overline{B}_{L_p}\right)\right)$ respectively.

The Kuratowski measure of noncompactness $\alpha$ is not minimalizable on spaces $l_p$  ($1<p<+\infty$). Indeed, if $\alpha$ was minimalizable on these spaces, then for $\varepsilon \in \left(0, 2\frac{\sqrt[n]{2}-1}{\sqrt[n]{2}}\right)$ ($n\in \nN$) and for $B_{l_p}$, there would exist an $\alpha$-minimal set $A\subset B_{l_p}$ such that $\alpha (A)\geq \alpha (B_{l_p})-\varepsilon=2-\varepsilon$. Since every $\alpha$-minimal set is also $\beta$-minimal and because $\alpha(A)=\beta (A)$ (see \cite{book2}, Lemma III. 2.9), we have that
$$2-\varepsilon\leq \beta(A) \leq \beta (B_{l_p})=2^{\frac{1}{p}} \ .$$
Because of the choice of $\varepsilon$ this would mean that $p\leq 1$, which is a contradiction with this choice of $p$.

Therefore, the measure of noncompactness $\alpha$ is strictly minimalizable on the space $l_1$. Namely, since $\beta (A) =2\chi(A)$ for an arbitrary bounded set $A \subset l_1$, (see \cite{book2}, Corollary X.4.7), and the general property
$$\chi(A)\leq \beta(A)\leq \alpha(A)\leq 2\chi(A) $$
holds, we conclude that $\alpha(A)=\beta(A)=2\chi(A)$. Since $l_1$ is weakly compactly determined, see \cite{article.7}, the Hausdorff measure of noncompactness $\chi$ is strictly minimalizable on $l_1$, which means that the measure $\alpha$ is also strictly minimalizable on $l_1$. Hence $l_1$ is the separable dual of the space $c_0$, so it is a space with the Radon-Nikodym property. Using Theorem \ref{mainThm} we conclude that $\Delta_{l_1,\alpha}(\varepsilon)$  is a continuous function on $[0,2)$.



\begin{thebibliography}{33}
\bibitem{article.10}
A. Reki\' c-Vukovi\' c, N. Oki\v ci\' c, I. Arandjelovi\' c, On modulus of noncompact convexity for a strictly minimalizable measure of noncompactness, Gulf Journal of Mathematics, 2 (3) (2014) 38-45.

\bibitem{article.7}
E. Maluta, S. Prus, Minimal sets for the Hausdorff measure of noncompactness and related coefficients, Nonlinear Anal. (47) (2001) 2753-2764.

\bibitem{article.3}
J.A. Clarkson, Uniformly convex spaces, Transactions of the American Mathematical Society, 40 (3) JSTOR (1936) 396-414.

\bibitem{article.1}
J. Banas, On modulus of noncompact convexity and its properties, Canad. Math. Bull. 30 (2) (1987) 186-192.

\bibitem{book2}
J.M.A. Toledano, T.D. Benavides, G.L. Acedo, Measures of noncompactness in metric fixed point theory, Springer 99, 1997.

\bibitem{article.5}
K. Goebel, T. Sekowski, The modulus of noncompact convexity, Ann. Univ. Mariae Curie-Sk{\l}odowska Sect. A 38 (1984) 41-48.

\bibitem{article.6}
R. Huff, Banach spaces which are nearly uniformly convex, Rocky Mountain J. Math 10 (4) (1980).

\bibitem{book1}
R.R. Akhmerov, M.I. Kamenskii, A.S. Potapov, A.E. Rodkina, B.N. Sadovskii, Measures of noncompactness and condensing operators, Birkh{\"a}user, 1992.

\bibitem{article.8}
S. Prus, Banach spaces with the uniform Opial property, Nonlinear Analysis: Theory, Methods and Applications, 18 (8) Elsevier (1992) 697-704.

\bibitem{article.2}
T.D. Benavides, G.L. Acedo, Lower bounds for normal structure coefficients, Proceedings of the Royal Society of Edinburgh: Section A Mathematics, 121 (3-4) Cambridge Univ. Press. (1992) 245-252.

\bibitem{article.11}
V.L. \v Smulian, Sur la structure de la sphe're unitaire dans l'espace de Banach, Matem. {cb.} 9 (51) (1941) 545-561.

\bibitem{article.9}
V. Rako\v cevi\' c, Measures of noncompactness and some applications, Filomat 12:2 (1998) 87-120.

\bibitem{article.4}
W.J. Davis, R.R. Phelps, The Radon-Nikodym property and dentable sets in Banach spaces, Proceedings of the American Mathematical Society 45 (1) (1974) 119-122.

\end{thebibliography}
\end{document}